\documentclass[10 pt]{amsart}
\usepackage{amssymb,amsfonts}
\usepackage[all,arc]{xy}
\usepackage{enumerate}
\usepackage{mathrsfs}
\usepackage{stmaryrd}
\usepackage{amssymb}
\usepackage{bbm}
\usepackage{faktor}
\usepackage{xfrac}
\usepackage{booktabs} 
\usepackage{array} 
\usepackage{paralist} 
\usepackage{verbatim} 
\usepackage{subfig} 
\usepackage[mathscr]{euscript}
\usepackage{lipsum}       
\usepackage{changepage}
\usepackage{mathtools}
\usepackage {tikz-cd}
\usepackage{bbm}
\usepackage{amsmath}

\usepackage{graphicx}
\graphicspath{{./Graphics/}}
\usepackage{gensymb}
\usepackage{wrapfig}
\usepackage{adjustbox}
\usepackage{float}
\usepackage{amsthm}
\usepackage[calc,showdow,english]{datetime2}
\usepackage{fancyhdr}
\usepackage{xcolor}
\usepackage[margin=1 in]{geometry}
\tikzset{%
    symbol/.style={%
        draw=none,
        every to/.append style={%
            edge node={node [sloped, allow upside down, auto=false]{$#1$}}}
    }
}
\usepackage{tikz}
\usetikzlibrary{calc}

\newcommand{\imgoverlay}[2]{%
  \begin{tikzpicture}
    \node[anchor=south west, inner sep=0] (img) at (0,0)
      {\includegraphics[width=.9\linewidth]{#1}};
    \begin{scope}[x={(img.south east)}, y={(img.north west)}]
      #2
    \end{scope}
  \end{tikzpicture}%
}

\newtheorem{thm}{Theorem}[section]

\newtheorem{lem}[thm]{Lemma}

\theoremstyle{definition}
\newtheorem{defn}[thm]{Definition}

\newtheorem{exmp}[thm]{Example}

\newtheorem{clm*}{Claim}

\theoremstyle{remark}
\newtheorem{rem}[thm]{Remark}

\newtheorem{warn}[thm]{Warning}

\usepackage[%
    colorlinks=true,
    pdfborder={0 0 0},
    linkcolor=blue,
    linktocpage=true,
    citecolor=blue
]{hyperref}

\title{ 4-dimensional Skein modules, Handle attachments, and Tangles}
\author{Gage Martin, Mary Stelow, and Mira Wattal}

\begin{document}

\begin{abstract}
    
Skein lasagna modules are a recent tool developed for the study of 4-manifolds. We provide general formula for 1-, 2-, and 3-handle attachments for skein modules defined with any functorial link theory in $S^3 \times I$ generalizing existing formula of Chen, Manolescu-Neithalath, Manolescu-Walker-Wedrich, and Ren-Willis. These formula are derived from a complete description of the gluing homomorphism on skein modules. For this description, we introduce a variation of these skein modules in the presence of distinguished 3-manifolds in the boundary. A similar construction was recently introduced independently by Blackwell-Krushkal-Luo.
\end{abstract}

\maketitle

\section{Introduction}\label{sec_intro}

Motivated by constructions in the study of 3-manifolds, Morrison-Walker-Wedrich~\cite{morrison_invariants_2022} introduced a version of a skein module for a 4-manifold $X$ with a framed, oriented link $L$ in its boundary: the skein lasagna module of $(X; L)$. This invariant is typically denoted by $S^\star(X; L)$, where the star indicates a choice of an input functorial link homology theory $H^\star$ for links in $S^3$. When such a theory is fixed in our exposition, we will denote in place of the star. For example, we will denote the input of Khovanov-Rozansky $\mathfrak{gl}_2$-homology as $S^2(X; L)$. When such a theory is ambiguous, we assume that it satisfies a few additional conditions, which we have included in Section \ref{sec_notation}. 

Since their introduction, skein lasagna modules have had applications to the study of 4-manifolds~\cite{ren_khovanov_2025-1,sullivan_bar-natan_2025,sullivan_kirby_2024}, most notably in Ren-Willis's gauge theory-free proof of the existence of homeomorphic but not diffeomorphic 4-manifolds~\cite{ren_khovanov_2025}. Many of these applications rely on understanding the behavior of skein lasagna modules under various handle attachments. This behavior was first described in the case of 2-handles for Khovanov-Rozansky $\mathfrak{gl}_n$-homology by Manolescu-Neithalath~\cite{manolescu_skein_2022}. Later formulae have also been worked out for 2-handles with other choices of link theories, as well as for 1- and 3-handles for $\mathfrak{gl}_n$-homology again~\cite{chen_floer_2022,manolescu_skein_2023,ren_khovanov_2025}.

We extend and package these results into formulae for 1-,2-, and 3-handle attachments for a general choice of functorial link theory. Our results can be summed into the following theorems. A more careful treatment of these results appears in Examples~\ref{1-handle},~\ref{2-handle}, and~\ref{3-handle} respectively.

\begin{thm} 
If $X_1$ is obtained from $X$ by the addition of a 1-handle along $S^0 \times B^3$ and $T$ is the tangle obtained from $L$ by cutting along the cocore of the 1-handle, then $S^\star(X_1; L)$ is the 0-th Hochschild homology of a bimodule associated to the triple: $(X; S^0 \times B^3; T)$. The bimodule structure is deduced from the action by an algebra associated to $S^0 \times B^3$. 
\end{thm}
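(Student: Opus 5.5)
The plan is to derive the statement from a gluing description of skein modules relative to distinguished 3-manifolds in the boundary. First, define for a 4-manifold $X$ with a codimension-zero submanifold $Y \subset \partial X$ (here $Y = S^0 \times B^3$) and a tangle $T$ in $\partial X$ meeting $\partial Y$ transversely, the relative skein module $S^\star(X; Y; T)$. It is generated by lasagna fillings whose surfaces may end on arbitrary tangles in $Y$, modulo the usual lasagna relations. Next, define the algebra $A(Y)$: as a vector space, the direct sum over boundary conditions of skein modules of $Y \times I$ relative to $Y\times\{0,1\}$. Its multiplication is stacking in the $I$ direction. Gluing collars $Y \times I$ onto $X$ along the two copies of $B^3$ makes $S^\star(X; Y; T)$ a bimodule over $A(B^3)$. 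The left action comes from one component of $S^0$ and the right action from the other; the orientation reversal of one ball is what produces a right module.

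The main step is a gluing theorem. If $X'$ is obtained from $X$ by identifying $Y_1 \subset \partial X$ with $Y_2 \subset \partial X$, then the natural map
\[
S^\star(X; Y_1 \sqcup \overline{Y_2}; T) \longrightarrow S^\star(X'; L)
\]
is surjective. Its kernel is spanned by elements $a\cdot m - m \cdot a$. Consequently $S^\star(X';L) \cong HH_0(A, M)$, where $M = S^\star(X; Y_1\sqcup \overline{Y_2}; T)$.

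For surjectivity, take any lasagna filling of $X'$. After an isotopy, make its surface transverse to the glued copy $Y$ and disjoint from the input balls; since the balls are small, they can be pushed off $Y$. Cutting along $Y$ then gives a relative filling of $X$.

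For injectivity modulo commutators, argue as follows. Any relation in $X'$ is a composition of elementary relations: isotopies, and replacements of a ball by a filling of that ball. Put each elementary relation in general position with respect to $Y$. A ball-insertion relation can be taken to avoid $Y$. An isotopy crossing $Y$ can be decomposed into a sequence of moves that pass a thin collar slice $Y\times I$, carrying a piece of surface (or an input ball), across $Y$. Each such move is exactly the identification of $a\cdot m$ with $m\cdot a$ for an element $a \in A(Y)$. Surfaces may also change their intersection tangle with $Y$ during the isotopy; a Cerf-type genericity argument reduces these changes to births, deaths and saddles of the intersection tangle, which are again absorbed into collar elements.

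I expect this kernel computation to be the main obstacle. The hard part is making precise that any isotopy of lasagna fillings relative to $Y$ factors through collar transfers, and checking that input balls meeting $Y$ cause no extra relations. I would try a standard parametrized transversality argument, taking $Y \times [-\epsilon,\epsilon]$ as a thin collar.

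Finally, apply the theorem to the 1-handle. The manifold $X_1$ is $X \cup (B^3\times I)$, and gluing the handle is equivalent to identifying the two balls of $S^0\times B^3$ in $\partial X$ through the handle collar. Cutting $L$ along the cocore $B^3\times\{1/2\}$ yields $T$. Then
\[
S^\star(X_1;L)\cong HH_0\bigl(A(B^3), S^\star(X; S^0\times B^3; T)\bigr),
\]
with the bimodule structure induced by the $A(B^3)$ action described above.
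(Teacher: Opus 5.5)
Your proposal is correct, but it is organized differently from the paper. The paper never proves a self-gluing theorem. It treats the handle $B^1\times B^3$ as a separate second piece and applies the two-piece gluing theorem (Theorem~\ref{thm_gluing}) along the disconnected $Y=-B^3\sqcup B^3$. This gives $S^\star(X_1;L)\cong M\otimes_{A\otimes A^{\mathrm{op}}}N$, where $N$ is the skein module of the handle relative to its attaching region. Since the handle is $B^3\times I$ with $Y$ as its two ends, $N$ is literally the algebra $A=S^\star(B^3;P)$ viewed as the diagonal bimodule. The Hochschild homology then appears for purely algebraic reasons, $M\otimes_{A\otimes A^{\mathrm{op}}}A\cong M/[A,M]$; this is the paper's step of rewriting $\Sigma_1\otimes\Sigma_2$ as $S_1\cdot\Sigma_1\cdot S_2$ for a factorization $S_2\cdot S_1=\Sigma_2$.

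You instead build $HH_0$ directly into a self-gluing theorem. This is slightly more general: two-piece gluing is the disjoint-union case, since $HH_0(A,M_1\otimes_k M_2)\cong M_1\otimes_A M_2$. It also skips the identification $N\cong A$. The cost is that you must redo the kernel computation in a setting where the paper's main trick is unavailable. For isotopies fixing the input balls, the paper splices $\Sigma(0)$ on one piece, the trace $J=\bigcup_t J_t\times\{t\}$ in a neck, and $\Sigma(1)$ on the other piece. That requires two genuinely separate sides of $Y$, and cutting a single connected $X$ open provides none.

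Your collar-transfer moves, which are the same two-parallel-copies argument the paper uses for input balls, are the right substitute. You can even drop the Cerf-type case analysis by fragmenting the ambient isotopy, perturbing slightly so intermediate fillings are cut transversely:
\begin{itemize}
\item pieces supported away from $Y$ do not change the cut at all;
\item for a piece supported in a collar $Y\times(-1,1)$, write $m$ for the part of the filling outside the collar and $a_i$, $b_i$ for the parts in its two halves. The cuts before and after are $b_0\,m\,a_0$ and $b_1\,m\,a_1$ with $a_0b_0=a_1b_1$ in $A$, so $b_0ma_0\equiv ma_0b_0=ma_1b_1\equiv b_1ma_1$ modulo $[A,M]$.
\end{itemize}
Finally, presenting the glued manifold as $X\cup_{Y_1\sqcup Y_2}(Y\times I)$ restores two separate pieces and reduces your self-gluing theorem to the two-piece one, which is exactly the paper's route.
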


\begin{thm}
If $X_1$ is obtained from $X$ by the addition of a 2-handle along $S^1 \times D^2$, then $S^{\star}(X_1; L)$ can be obtained by ``cabling" a module associated to the triple: $(X; S^1 \times D^2; \varnothing)$. The cabling relations are deduced from an action by an algebra associated to $S^1 \times D^2$. 
\end{thm}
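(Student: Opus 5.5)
The plan is to realize $S^\star(X_1;L)$ as a quotient of a skein module relative to the attaching region. Write $X_1 = X \cup_{S^1\times D^2} H$, where $H = D^2\times D^2$ is the 2-handle. The gluing homomorphism (described in full later in the paper) turns this decomposition into a presentation, in three steps.

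\textbf{Step 1: the relative module.} Let $N = S^1\times D^2 \subset \partial X$ be the attaching region. The relative skein module $S^\star(X; N; \varnothing)$ allows lasagna fillings whose surfaces meet $N$ in properly embedded framed tangles. For a 2-handle it is enough to keep the tangles that are unions of parallel cores $S^1\times\{p_i\}$ with orientations, i.e.\ framed cables. This module is graded by the oriented cable data $(k^+,k^-)$.

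The associated algebra $A(S^1\times D^2)$ is the skein algebra of $N\times I$. It is spanned by lasagna fillings of $N\times I$ between cable tangles, and in particular contains the diffeomorphisms and cobordisms of $N\times I$ that permute, twist and create or cancel cable strands. It acts on $S^\star(X;N;\varnothing)$ by stacking collars on the boundary.

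\textbf{Step 2: the gluing map.} On the handle side, $H$ meets $N$ in $S^1\times D^2 = \partial D^2\times D^2$. Any surface in $H$ can be isotoped to a union of parallel cocore-type disks $D^2\times\{p_i\}$, since $H$ deformation retracts to its core and one applies general position. So the skein module of $H$ relative to $N$ is generated by the cores capped by these disks. Gluing gives a map
\[
\bigoplus_{k^\pm} S^\star(X;N;\varnothing)_{(k^+,k^-)} \to S^\star(X_1;L),
\]
which caps each cable strand with a disk.

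\textbf{Step 3: surjectivity and the kernel.} Surjectivity follows by taking any lasagna filling of $X_1$, shrinking the input balls away from the cocore, and isotoping the surface to be transverse to the cocore. It then meets $H$ in parallel core disks, which gives a preimage.

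The kernel is the part I expect to be the main obstacle. Two fillings of $X_1$ that are related by isotopy through $H$ or by lasagna relations inside $H$ must differ by the action of $A(S^1\times D^2)$. Concretely, one has to check that every element of $A(S^1\times D^2)$ that becomes trivial after capping, namely
\begin{itemize}
\item permutations of the cable strands,
\item full twists (handle slides over the disks change the framing of the cables),
\item creation and annihilation of oppositely oriented pairs (tubing two disks),
\end{itemize}
gives exactly the cabling relations: $v \sim a\cdot v$ whenever $a$ and the identity have the same cap.

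I would first try to get this from a parametrized transversality argument. Take a generic one-parameter family of fillings relating the two images and record when the surface becomes non-transverse to the cocore sphere. Each such event is a birth, death or saddle of intersection points, and each one is modeled by one of the generators above. Isotopies of the input balls across the cocore are absorbed by moving the balls off a collar of $H$ first.

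The resulting colimit of the relative modules over the cable index category, taken modulo this action, is the ``cabling'' description in the statement. It should recover the Manolescu--Neithalath formula when $H^\star$ is $\mathfrak{gl}_n$-homology.
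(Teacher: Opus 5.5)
The outline matches the paper: cut along the attaching region, keep only cable tangles, and get surjectivity from transversality to the cocore. The gap is in the step that carries the content of the statement, namely identifying the cabling relations, and it fails as you state it.

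You say the creation and annihilation elements of $A(S^1\times D^2)$ ``become trivial after capping'' and impose $v\sim a\cdot v$ for them. They do not become trivial. Take $a=S^1\times T$ with $T$ a crossingless $(n,n+2)$-tangle with one semicircle, and glue it to the core-parallel disks $d(n+2)$. The result is $d(n)\sqcup S$, where $S$ is a split closed sphere. So the tensor relation $v\cdot a\otimes d(n+2)=v\otimes a\cdot d(n+2)$ reads $v\cdot a\sim \epsilon_{H^\star}(S)\,v$, where $\epsilon_{H^\star}(S)$ is the evaluation of that sphere under $H^\star$.

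For $\mathfrak{gl}_2$ an undecorated sphere evaluates to $0$, so $v\cdot a$ maps to zero in $S^\star(X_1;L)$. Imposing $v\sim v\cdot a$ would then force every $v$ to zero. The nontrivial relation (Manolescu--Neithalath's dotted annulus) only appears once the sheet carries $H^\star$-decorations. Likewise, the reverse sheet caps to $d(n)$ together with an annulus, not to $d(n+2)$.

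This is exactly the observation the paper's proof rests on. It restricts to the subalgebra $\mathcal{Y}'$ generated by possibly decorated sheets $S^1\times T$ and braided sheets $S^1\times B$. Braided sheets fix $d$, because the braid isotopy extends over the disks. The other sheets adjoin a possibly decorated sphere, whose $H^\star$-evaluation is the relation.

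Two smaller points on your list of generators. The ``full twists'' item needs no separate justification, since $S^1\times\Delta^2$ is already a braided sheet; the cable framing is fixed by the handle, and handle slides play no role. Also, ``permutations'' should be braids, since nothing tells you a priori that the action factors through the symmetric group.

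The same problem already appears in Step 2. A deformation retraction of $H$ onto its core says nothing about isotoping surfaces. It is false that every filling of $H$ reduces to undecorated core-parallel disks: $S^\star(B^4;U_n)\cong H^\star(U_n)$ is in general larger than the span of the undecorated disk element, and this is precisely where the decorations come from. General position gives something weaker. Make a filling transverse to the cocore (a disk, not a sphere) and move its input balls off it. Then the filling meets a small neighborhood of the cocore in core-parallel disks, and the rest lies in the collar $N\times I$. So the handle-side module is generated by the $d(n)$ as an $A(S^1\times D^2)$-module, not spanned by them.

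With that in hand, you do not need to rebuild the kernel from a parametrized transversality argument. Theorem~\ref{thm_gluing} already identifies $S^\star(X_1;L)$ with the tensor product over $A(S^1\times D^2)$. The kernel on the cable part is therefore spanned by the differences $v\cdot a-v\cdot b$, where $a$ and $b$ have the same image after capping by the disks. What remains is to compute this capping on generating sheets, as above. The reduction to those generators is where the paper appeals to the Manolescu--Neithalath argument.
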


\begin{thm}
If $X_1$ is obtained from $X$ by the addition of a 3-handle along $S^2 \times D^1$, then $S^{\star}(X_1; L)$ is the quotient of a module associated to the triple: $(X; S^2 \times D^1; \varnothing)$. The relations are deduced from an action by an algebra associated to $S^2 \times D^1$. 
\end{thm}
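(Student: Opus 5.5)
The plan is to derive the 3-handle formula from a general gluing description of skein modules relative to a distinguished boundary 3-manifold. Let $X_1 = X \cup_{S^2\times D^1} (B^3\times D^1)$, and let $Y=S^2\times D^1\subset\partial X$ be the attaching region. Since $L\subset \partial X_1$ may be isotoped off the belt region, we may assume $L\subset \partial X\setminus Y$.

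I will work with the relative skein module $S^\star(X;Y;\varnothing)$. It is spanned by lasagna fillings of $X$ whose surfaces may meet $Y$ in a link $K$, decorated by classes in $H^\star(K)$, modulo the usual lasagna relations. There is a natural algebra $A(Y)$ built from the skein lasagna module of $Y\times I$, relative to $Y\times\{0,1\}$, with composition given by stacking. Collaring $Y$ in $X$ gives a right action of $A(Y)$ on $S^\star(X;Y;\varnothing)$. Similarly, the handle $H=B^3\times D^1$, with $Y\subset \partial H$, gives a left module $S^\star(H;Y;\varnothing)$. The general gluing statement should identify $S^\star(X_1;L)$ with $S^\star(X;Y;L)\otimes_{A(Y)} S^\star(H;Y;\varnothing)$. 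The map sends a pair of fillings to the filling of $X_1$ obtained by gluing along the matching link in $Y$.

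The key steps are as follows. First, show the gluing map is surjective. Given a lasagna filling of $X_1$, apply a small isotopy to make the surface transverse to $Y$. Then use the lasagna relations to move all input balls off $Y$, so the filling splits as a filling of $X$ and a filling of $H$ agreeing on $Y$. Second, show the kernel is generated exactly by the balancing relations $x\cdot a\otimes h = x\otimes a\cdot h$. An isotopy or lasagna relation in $X_1$ may cross $Y$. By a genericity argument, such an isotopy decomposes into moves supported away from $Y$ and moves supported in a collar $Y\times I$; the latter are exactly absorbed by elements of $A(Y)$.

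The remaining step is to compute the handle side. Because $H\cong B^4$ and $Y$ is most of $\partial H$, a filling of $H$ with boundary link $K\subset Y$ is determined up to relations by the evaluation $H^\star(K)\to H^\star(\varnothing)$. This uses the fact that $S^\star(B^4;K)\cong H^\star(K)$, one of the standing assumptions from Section \ref{sec_notation}. As a result, $S^\star(H;Y;\varnothing)$ is a cyclic $A(Y)$-module. It is generated by the empty filling and is isomorphic to $A(Y)/I$, where $I$ is the left ideal of elements that become zero after capping off by $B^3\times D^1$. The tensor product then collapses to $S^\star(X;Y;L)/(S^\star(X;Y;L)\cdot I)$. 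The final identification is to restrict to fillings meeting $Y$ trivially, giving $S^\star(X;S^2\times D^1;\varnothing)$ with $L$ recorded in the complement. The justification is that every element of $A(Y)$ factors, modulo $I$, through the empty link on $Y$. This yields the quotient of the module associated to $(X;S^2\times D^1;\varnothing)$ by relations coming from the $A(Y)$-action, as claimed.

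The main obstacle is the injectivity and kernel computation in the gluing step. One must show that every relation in $X_1$ can be factored into relations on each side plus transfers through the collar. I would first handle isotopies via a relative transversality argument for one-parameter families of surfaces meeting $Y$. I would then treat the lasagna ball-insertion relations by shrinking the balls away from $Y$, using that $Y$ has a collar.
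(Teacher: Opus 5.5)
Your proposal is correct, and its overall architecture is the paper's. You glue the handle $H=B^3\times D^1\cong B^4$ to $X$ along $Y=S^2\times D^1$ using Theorem~\ref{thm_gluing}, which you may simply cite; your sketch of its proof follows the paper's. You then collapse the handle side using the fact that $H$ is a $4$-ball.

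Where you differ is in how you cut the tensor product down to fillings that miss $Y$. The paper does this geometrically and up front. A general position argument (surfaces and input balls miss the $1$-dimensional cocore, so they can be pushed off the handle) replaces the full algebra by the closed fillings $S^\star(Y\times I;\varnothing)$. After that the handle side is just $S^\star(B^4;\varnothing)\cong k$, and the relations are $x\cdot a=\mathrm{ev}(a)\,x$ for $x\in S^\star(X;L)$.

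You instead keep the full algebra and present the handle module as a cyclic module $A(Y)/I$, with $I$ the annihilator of the empty filling. Then $M\otimes_{A}(A/I)\cong M/M\cdot I$, which is $S^\star(X;L)$ modulo all $m\cdot a$ with $a$ ending at $\varnothing$ and killed by capping. This already proves the statement as posed, and it avoids having to compare the tensor product over the full algebra with the one over its closed part.

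Two steps need tightening.

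\emph{Cyclicity of $S^\star(H;Y;\varnothing)$.} This does not follow from the evaluation isomorphism. That isomorphism is $S^\star(B^4;K)\cong H^\star(K)$, not a map $H^\star(K)\to H^\star(\varnothing)$, and it is a consequence of the standing assumptions rather than one of them. Cyclicity instead follows from either of two facts:
\begin{enumerate}
\item the same general-position push-off of any filling of $H$ into a collar of $Y$; or
\item every link in $S^2\times D^1$ isotopes into a $3$-ball, where a single input ball in the collar realizes any class.
\end{enumerate}

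\emph{Reducing to the paper's closed relations.} If you want this reduction, replace ``factors modulo $I$ through the empty link'' by the concrete statement $e_K\in A\,e_\varnothing\,A$ for every $K$. To prove it, isotope $K$ into a ball $B\subset Y$. Monoidality gives $S^\star(B\times I;-K\sqcup K)\cong H^\star(-K)\otimes H^\star(K)$. So the cylinder on $K$ is a sum of fillings with one input ball near each end, each meeting the middle level in $\varnothing$.
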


Our handle attachment formulae all follow as specific cases of a general gluing formula, which is Theorem \ref{thm_gluing}. As above, we have summed its technical content below.  

\begin{thm}
Let the 4-manifold and link pair $(X; T_1\cup T_2)$ be the result of gluing 4-manifold, 3-manifold, and tangle triples $(X_1; Y; T_1)$ and $(X_2; Y; T_2)$ along a 3-manifold $Y$. Then, the skein lasagna module of $(X; T_1\cup T_2)$ can be obtained as a bimodule tensor product of a left module associated to $(X_1; Y; T_1)$ and a right module associated to $(X_2; Y; T_2)$.
\end{thm}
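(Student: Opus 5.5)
We plan to prove the gluing statement by first setting up the relative skein lasagna modules for a triple $(X_i; Y; T_i)$, and then exhibiting mutually inverse maps between $S^\star(X; T_1\cup T_2)$ and the bimodule tensor product. For a triple $(X_1;Y;T_1)$ with $Y\subset \partial X_1$, we define $S^\star(X_1;Y;T_1)$ as a graded vector space indexed by boundary tangles: for each framed oriented tangle $\tau \subset Y$ with $\partial \tau = \partial T_1\cap \partial Y$, we take lasagna fillings of $X_1$ whose surface meets $\partial X_1$ in $T_1\cup\tau$. We quotient by the usual lasagna relations, namely isotopy rel boundary and inserting a filled ball via the cobordism map of $H^\star$. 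The algebra $\mathcal{A}(Y)$ has as generators lasagna fillings of $Y\times I$ with tangles $\tau,\tau'$ on the two ends, and its multiplication is stacking. Gluing a collar $Y\times I$ to $X_1$ makes $S^\star(X_1;Y;T_1)$ a right $\mathcal{A}(Y)$-module, and gluing to $X_2$ makes $S^\star(X_2;\overline{Y};T_2)$ a left module. Checking that these actions are well defined reduces to noting that the lasagna relations are local, so they survive gluing.

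Next we build the gluing map
\[
\Phi\colon S^\star(X_1;Y;T_1)\otimes_{\mathcal{A}(Y)} S^\star(X_2;\overline{Y};T_2)\to S^\star(X;T_1\cup T_2).
\]
It sends a pair of fillings that agree on the same tangle $\tau$ to their union along $Y$. This map is well defined and balanced, because inserting a collar filling on the left gives the same glued filling as inserting it on the right. It is also visibly compatible with the lasagna relations on each side.

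Surjectivity comes from general position. Take any lasagna filling of $X$. We isotope its input balls off $Y$ and make the surface transverse to $Y$, so that it meets $Y$ in a tangle $\tau$. Cutting along $Y$ then gives a pair of relative fillings.

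Injectivity is the main obstacle. We need to show that every lasagna relation in $X$ can be factored into relations on each side together with collar moves that are absorbed by the tensor relation. We would argue with a one-parameter family. Two equivalent fillings are joined by a sequence of ambient isotopies and ball insertions. After a perturbation, each ball insertion happens away from $Y$, so it is a relation on one side. An isotopy, after being made generic with respect to $Y$, is a path of surfaces whose intersection with $Y$ changes through finitely many Morse-type events: births and deaths of circles, saddles, and moments when input balls cross $Y$. We would flatten a neighbourhood $Y\times[-\epsilon,\epsilon]$ and put the whole isotopy into that collar, so that the surface inside the collar is a filling of $Y\times I$, that is, an element $a\in\mathcal{A}(Y)$. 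The equivalence then takes the form $x\cdot a\otimes y = x\otimes a\cdot y$, which is precisely the tensor relation.

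The delicate points are twofold. First, any input ball crossing $Y$ must be shrunk and moved into the collar, or to one side, by an isotopy; this is why the generators of $\mathcal{A}(Y)$ must be allowed to carry input balls. Second, we must check that the whole construction is independent of the choices of collar and of generic perturbation. We expect the main effort to go into this part, namely showing that any isotopy rel boundary of $X$ can be pushed into a collar of $Y$, using an isotopy-extension and Cerf-type argument.

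With both $\Phi$ and its inverse from the cutting construction in hand, we obtain Theorem \ref{thm_gluing}. The handle formulae then follow from it. For a 1-handle we take Hochschild $HH_0$, since self-gluing gives a coinvariant quotient. For the 2- and 3-handles we tensor with the module of $B^4$ viewed as a handle filling.
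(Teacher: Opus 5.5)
Your plan is correct and matches the paper's proof. The paper also defines a glue map and a cut map as its inverse. It shows the cut is independent of choices by splitting a one-parameter family into two cases. In the first, an input ball crosses $Y$, and the change is absorbed by the algebra element lying between $Y$ and a pushed-off copy. In the second, the isotopy fixes all balls, and its trace $J=\bigcup_t (\Sigma_t\cap Y)\times\{t\}$ is an element of $S^\star(Y;P)$ that gives $\Sigma_1\otimes J\cdot\Sigma_2' = \Sigma_1\cdot J\otimes\Sigma_2'$.

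The step you flag as the main effort needs no Cerf-type analysis of Morse events. The paper inserts $J$ into a neck to form $\Sigma_J=\Sigma(0)|_{X_1}\cup J\cup\Sigma(1)|_{X_2}$, then isotopes $\Sigma(0)$ to $\Sigma_J$ by an isotopy supported in $X_2$ and $\Sigma(1)$ to $\Sigma_J$ by one supported in $X_1$. Your explicit treatment of ball-insertion relations covers a point the paper leaves implicit.
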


The gluing formula involves the technology of a skein lasagna module associated to a parameterized triple, which we will introduce in Section \ref{sec_skeintriple}. It is a variation of the existing skein module invariants associated to 4-manifolds, concocted precisely for its versatility in describing the cutting and pasting of 4-manifolds. A similar construction was introduced independently by Blackwell-Krushkal-Luo to study trisections~\cite{blackwell_cornered_2025}.

\section{Notation}\label{sec_notation}

We always work with skein lasagna modules $S^\star(X; L)$ defined using functorial link homology theories that satisfy these minimal conditions:
\begin{enumerate}
\item the theories are strictly monoidal with respect to disjoint unions and tensor products; and 
\item the theories can be extended to framed, oriented links in \emph{abstract} $S^3$ and link cobordisms in \emph{abstract} $S^3 \times I$. 
\end{enumerate}
These conditions are borrowed and subsequently abridged from~\cite{morrison_invariants_2022,ren_khovanov_2025}. We often but not always denote Khovanov-Rozansky $\mathfrak{gl}_n$-homology by $\mathrm{KhR}_n$, Khovanov homology by $\mathrm{Kh}$, and any ambiguous functorial link homology theory by $H^\star$. Finally, we implicitly fix the ground ring to be a field $k$. 

\section{The skein lasagna module of a triple}\label{sec_skeintriple}

To formally state our gluing formula, we require an extension of skein lasagna modules to 4-manifold, 3-manifold, and tangle triples. Our extension comes equipped with an action by an algebra associated to the 3-manifold and also specializes to Khovanov's tangle invariant in a precise sense. 

Explicitly, let $X$ be an oriented 4-manifold with boundary; $\phi \colon Y \to \partial X$ be an orientation-preserving embedding of a compact, oriented 3-manifold $Y$ that extends to some fixed framing; and $T$ be a framed, oriented tangle in $\partial{X} \setminus \phi(\mathrm{int}(Y))$, whose boundary meets $\phi(\partial{Y})$ in a compatibly framed, oriented collection $P$. We additionally assume that $T$ intersects each component of $\phi(Y)$ generically with trivial algebraic intersection. This data specifies a module associated to the parameterized triple: $(X; (Y, \phi); T)$.

\begin{defn}[The skein lasagna module of a triple]\label{defn_skeintriple}
Let $T_Y \subset Y$ be any embedded, framed tangle whose boundary is the compatibly framed, oriented collection $-\phi^{-1}(P)$. To $T_Y$, we associate a skein lasagna module $S^\star(X; T \cup_P \phi(T_Y))$. Skein lasagna modules such as these assemble into a $k$-module:
\[
S^\star(X; Y; T) \coloneqq \bigoplus_{T_Y} S^\star(X; T \cup_P \phi(T_Y)),
\]
which we define to be the skein lasagna module of $(X; Y; T)$. 
\end{defn}

\begin{warn}\label{warn_parametrization}
Though the 3-manifold $Y$ in Definition \ref{defn_skeintriple} is specified by an explicit embedding and framing into $\partial{X}$, we suppress the parameterizing map in our notation to prioritize being less verbose. We will continue to abbreviate our notation in this way; however, the reader should remain eagle-eyed of the fact that $Y$ is not abstract.
\end{warn}

We can promote $S^\star(X; Y; T)$ to a right-module over a $k$-algebra associated to $Y$ and $P$, which we also take to be the skein lasagna module of $(X; Y; T)$. 

\begin{defn}[The algebra associated to a 3-manifold]\label{defn_algebra}
To a pair of framed, oriented tangles $T_Y^0$ and $T_Y^1$ embedded in $Y$ with the same boundary conditions as in Definition \ref{defn_skeintriple}, we associate a skein lasagna module $S^\star(Y \times I; -T^0_Y \cup_{\phi^{-1}(P)} T^1_Y)$, where $T^0_Y \subset Y \times {0}$, $T^1_Y \subset Y \times {1}$, and $-T^0_Y \cup_{\phi^{-1}(P)} T^1_Y$ is the framed, oriented link indicated by the gluing \[-T^0_Y \cup_{\phi^{-1}(P) \times 0} (-\phi^{-1}(P) \times I )\cup_{\phi(P) \times 1} T^1_Y.\] Skein lasagna modules such as these assemble into a $k$-algebra:
\[
S^\star(Y; P) \coloneqq \bigoplus_{T^0_Y, \, T^1_Y} S^\star(Y  \times I; -T^0_Y \cup_{\phi^{-1}(P)} T^1_Y),
\]
whose multiplication is defined on a pair of fillings $S_1$ and $S_2$ in $S^*(Y; P)$ according to the following rule:
\[
S_1 \cdot S_2 \coloneqq 
\begin{cases}
S_1 \cup_{S_1 \cap Y \times \{1\}} S_2 = S_1 \cup_{S_2 \cap Y \times \{0\}} S_2 & \text{if $S_1 \cap Y \times \{1\} = -(S_2 \cap Y \times \{0\})$} \\
0 & \text{otherwise}.
\end{cases}
\]
The algebra comes equipped with local units, which are specified by finite sums of cylinders on tangles with the same embedding and boundary conditions as before.
\end{defn}

The $k$-algebra in Definition \ref{defn_algebra} acts on $S^\star(X; Y; T)$ on the right in the evident way, according to a similar rule as multiplication in $S^\star(Y; P)$. 

\begin{rem}
If instead we were given an orientation-preserving map $\phi: -Y \to \partial X$ extending to a possibly different fixed framing and a compatibly framed, oriented tangle $T$ with boundary $-P$, then we could collect the skein lasagna modules $S^\star(X; -\phi(T_Y) \cup_P T)$, where $T_Y$ ranges over all framed, oriented tangles in $Y$ with compatibly framed, oriented boundary $-\phi^{-1}(P)$. These modules would assemble into a \emph{left} module over $S^\star(Y; P)$. We will henceforth refer to the right and left counterparts of $S^\star(X; Y; T)$ using the same notation. Context will make clear which module structure is present.  
\end{rem}

\begin{exmp}[Test case]\label{exmp_testcase}
One of the simplest examples of a skein lasagna module of $(X; Y; T)$ occurs when $X = B^4$, $Y = B^3$, and $\star = 2$. In this case, both the module associated to $(B^4; B^3; T)$ and the algebra associated to $(B^3; P)$ can be rewritten as direct sums of $\mathfrak{gl}_2$-homologies:
\[
S^2(B^4; B^3; T) \cong \bigoplus_{T_Y} \mathrm{KhR}_2(T \cup_P T_Y), \quad \text{and} \quad
S^2(B^3; P) \cong \bigoplus_{T^0_Y, \, T^1_Y} \mathrm{KhR}_2(-T^0_Y \cup_P T^1_Y).
\]
Note that we ignore the parametrization of $B^3$ in $S^3$ as it is unambiguous.

The above equivalences are consequences of the evaluation isomorphism
\[
\mathrm{ev} \colon S^2(B^4; L) \to \mathrm{KhR}_2(L),
\]
which was originally defined by Morrison-Walker-Wedrich in \cite{morrison_invariants_2022}. They can be further enhanced into mappings that intertwine the $S^2(B^3; P)$-action\footnote{or multiplication, in the case of $S^2(B^3; P)$} on either side, after defining a multiplication on the right hand side of the second equivalence. 
    
 \begin{figure}
    \centering
\includegraphics[width=0.8\linewidth]{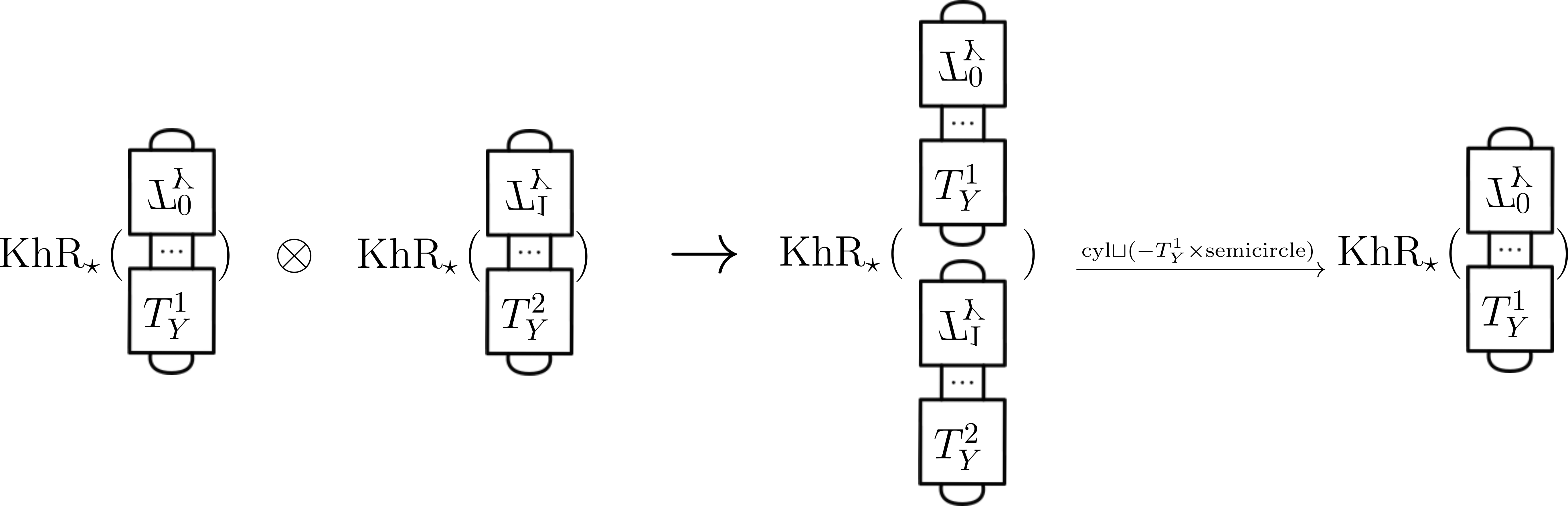}
    \caption{Multiplication in $\text{KhR}_\star$.}
    \label{fig:tangle_mult}
\end{figure}

Explicitly, let $v \in \mathrm{KhR}_2(-T^0_Y \cup_P T^1_Y)$ and $w \in \mathrm{KhR}_2(-T^2_Y \cup_P T^3_Y)$ be two elements in $S^\star(Y; P)$. If $T^1_Y \neq T^2_Y$, then set the multiplication $v \cdot w$ to be zero. Otherwise, define $v \cdot w$ to be the image of $v \otimes w$ under ``vertical composition", which is (again) borrowed from \cite{morrison_invariants_2022} and described by picture in Figure~\ref{fig:tangle_mult}. In words, vertical composition is induced by a link cobordism, which has ingoing boundary $-T^0_Y \cup_P T^1_Y \sqcup -T^1_Y \cup_P T^2_Y$ and outgoing boundary $-T^0_Y \cup T^2_Y$. It is cylindrical on the top and the bottom and $T^1_Y \times \text{semicircle}$ in the middle. 

Altogether, the above discussion can be summed by the below lemma, which connects the multiplication of fillings on the left with the vertical composition of $\mathfrak{gl}_2$-homologies on the right.

\begin{lem}\label{lem_evaluation}
The evaluation isomorphism specifies a $k$-algebra isomorphism:
\[
S^2(B^3; P) \cong \bigoplus_{T^0_Y, T^1_Y} \mathrm{KhR}_2(-T^0_Y \cup_P T^1_Y)
\]
and subsequently, an isomorphism of $S^2(B^3; P)$-modules:
\[
S^2(B^4; B^3; T) \cong \bigoplus_{T_Y} \mathrm{KhR}_2(T \cup_P T_Y).
\]
\end{lem}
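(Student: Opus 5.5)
The plan is to apply the evaluation isomorphism of Morrison--Walker--Wedrich summand by summand, and then check that multiplication of fillings corresponds to vertical composition. First identify $B^3 \times I$ with $B^4$ after smoothing corners. Under this identification, the link $-T^0_Y \cup_{P} T^1_Y$ in $\partial(B^3\times I)$ becomes the link in $S^3$ obtained by doubling, namely $T^0_Y$ reflected and glued along $P\times I$ to $T^1_Y$. Applying $\mathrm{ev}\colon S^2(B^4;L)\to \mathrm{KhR}_2(L)$ to each summand gives a $k$-linear isomorphism of direct sums. The same argument applies to $S^2(B^4;B^3;T)=\bigoplus_{T_Y} S^2(B^4; T\cup_P T_Y)$. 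So the underlying linear isomorphisms are immediate, and what remains is compatibility with the products.

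The main step is to show that $\mathrm{ev}(S_1\cdot S_2)=\mathrm{ev}(S_1)\cdot \mathrm{ev}(S_2)$, where the product on the right is vertical composition. If the inner tangles disagree, both sides vanish by definition, so assume $S_1$ lies in the summand for $(T^0_Y,T^1_Y)$ and $S_2$ in the summand for $(T^1_Y,T^2_Y)$.

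1. Recall how $\mathrm{ev}$ is computed. A lasagna filling of $B^4$ consists of input balls $B_i$ decorated with classes $v_i\in \mathrm{KhR}_2(L_i)$, together with a surface $\Sigma$ in $B^4\setminus \sqcup B_i$. Its evaluation is $\mathrm{KhR}_2(\Sigma)(\otimes v_i)$, the cobordism map applied to the inputs, once the complement is identified with $S^3\times I$ minus balls.
2. Write the glued filling $S_1\cup_{T^1_Y} S_2$ in $B^3\times[0,2]\cong B^4$. Its input balls are those of $S_1$ and $S_2$, and its surface is $\Sigma_1\cup\Sigma_2$ glued along $T^1_Y\times\{1\}$.
3. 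Exhibit a decomposition of $B^3\times[0,2]$ minus the input balls. It should be a union of two copies of $(B^3\times I)$ minus balls, placed side by side in a larger ball, together with a ``saddle'' region. That region is diffeomorphic to $S^3\times I$ minus two balls, and in it the surface is exactly the cobordism of Figure~\ref{fig:tangle_mult}: cylinders on $T^0_Y$ and $T^2_Y$, and $T^1_Y\times$ semicircle. Concretely, thicken the slice $B^3\times\{1\}$; the semicircle arises by pushing $T^1_Y\times\{1\}$ off into the two halves.
4. Use functoriality of $\mathrm{KhR}_2$ under composition of cobordisms in abstract $S^3\times I$, which is condition (2) of Section~\ref{sec_notation}, together with monoidality, condition (1). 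These give that the evaluation of the glued filling equals the vertical composition map applied to $\mathrm{ev}(S_1)\otimes \mathrm{ev}(S_2)$.

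The same decomposition argument, with $B^4$ in place of one factor, shows that the right action on $S^2(B^4;B^3;T)$ matches vertical composition, which gives the module isomorphism. Local units go to the classes of identity cobordisms, namely the cylinders, so these are preserved as well. Associativity of the transported multiplication then follows from associativity of gluing.

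The main obstacle is step 3: the diffeomorphism identifying the glued region with the vertical composition cobordism must be chosen carefully. It has to respect framings and the smoothing of corners, and it has to be well-defined up to isotopy rel boundary, so that the cobordism maps agree and no sign or framing ambiguities arise. I would handle this by using the fact that $\mathrm{ev}$ is independent of the choice of such identifications, because lasagna fillings are considered up to isotopy and the theory extends to abstract $S^3$. With that, it suffices to write down one explicit model decomposition.
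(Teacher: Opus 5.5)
Your proposal is correct and follows essentially the paper's argument. The paper likewise replaces each factor by a filling with a single large input ball carrying its evaluation. It then observes that the region between these two balls and one large ball for $S_1\cup S_2$ carries exactly the vertical composition cobordism of Figure~\ref{fig:tangle_mult}, concludes by functoriality and monoidality, and handles the module isomorphism with the same picture. One small slip: that intermediate region is a $4$-ball minus two balls, i.e.\ $S^3\times I$ minus one ball, not minus two.
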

\begin{proof}
Both isomorphisms in Lemma \ref{lem_evaluation} follow from similar pictures, so we only substantiate the first. The first isomorphism expresses a collection of commutative diagrams:
\[
\begin{tikzcd}[column sep=.6in]
S^2(B^3 \times I; -T^0_Y \cup_P T^1_Y) \otimes S^2(B^3 \times I; -T^1_Y \cup_P T^2_Y) \ar[r, "\mathrm{ev} \, \otimes \, \mathrm{ev}"] \ar[d, "- \cdot -", swap] & \mathrm{KhR}_2(-T^0_Y \cup_P T^1_Y) \otimes \mathrm{KhR}_2(-T^1_Y \cup_P T^2_Y) \ar[d, "- \cdot -"] \\
S^2(B^3 \times I; -T^0_Y \cup_P T^2_Y) \ar[r, "\mathrm{ev}"] & \mathrm{KhR}_2(-T^0_Y \cup_P T^2_Y),
\end{tikzcd}
\]
one for each triple of tangles $T^0_Y$, $T^1_Y$, and $T^2_Y$ embedded in $B^3$ satisfying the same boundary conditions as in Definition \ref{defn_algebra}. The fact that these diagrams commute follows from a series of topological moves that either represent gluings or equivalences between lasagna fillings.

Explicitly, let $F$ be a filling in $S^2(B^3 \times I, - T^0_Y \cup_P T^1_Y)$ and $G$ be a filling in $S^2(B^3 \times I, - T^1_Y \cup_P T^2_Y)$. Chasing $F \otimes G$ right first and down second through the diagram amounts to
\begin{enumerate}
\item respectively identifying $F$ and $G$ with the fillings $F'$ and $G'$ that are each specified by a single, large input ball, cylinder on the boundary link, and input label; and
\item mapping the tensor product of the labels corresponding to $F'$ and $G'$ to its image under the link cobordism map in Figure \ref{fig:tangle_mult}.
\end{enumerate}
On the other hand, chasing down first and right second means
\begin{enumerate}
\item replacing $F \otimes G$ with the union $F \cup G$, which is a filling in $S^2(B^3 \times I, - T^0_Y \cup_P T^2_Y)$;
\item identifying $F \cup G$ with a third filling specified by a single, large input ball, cylinder on the boundary link, and input label; and 
\item evaluating this third filling to its label in $\mathrm{KhR}_2(-T^0_Y \cup_P T^2_Y)$. 
\end{enumerate}
Notice that the Khovanov-Rozansky map induced by the intersection of $F \cup G$ with the large input ball in the second step of the second chase is the same as the composite of two Khovanov-Rozansky maps described by the two steps in the first chase. The first step expresses the tensor product of two maps: the first is induced by the intersection of $F$ with the input ball corresponding to $F'$ and the second is induced by the intersection of $G$ with the input ball corresponding to $G'$. The second step subsequently applies vertical composition to this tensor product. 

\begin{rem}
Both evaluation and vertical composition can be extended to general $H^\star$-homology with minimal elbow grease, as both of these operations are induced by link cobordisms in $S^3 \times I$. Subsequently, the above example can rewritten with $H^\star$ substituted for $\mathrm{KhR}_2$ everywhere. 
\end{rem}

\end{proof}

\end{exmp}

\begin{exmp}[The test case recovers Khovanov's tangle invariant]
We briefly elucidate how to relate Definition \ref{defn_skeintriple} to Khovanov's tangle invariant described in~\cite{khovanov_functor-valued_2002}. We largely omit a presentation of the relevant definitions and instead refer the reader to Khovanov's paper.

Recall that Khovanov assigns to each $n$-crossing diagram $D$ of an $(m,n)$-tangle $T$ a chain complex of $(H^m, H^n)$-bimodules. Denote the complex by $\mathcal{F}(D)$ and its homology by $H^\bullet(\mathcal{F}(D))$. The complex is an invariant of $T$ up to $(H^m, H^n)$-chain homotopy equivalence and it splits along $B^m \times B^n$. Likewise, the homology is an invariant of $T$ up to $(H^m, H^n)$-bimodule isomorphism and it splits as
\[
\bigoplus_{\mathclap{(a, b) \in B^m \times B^n}} \mathrm{Kh}({W(a)}D{b}).
\]
Here, $W(a)D{b}$ is the planar diagram obtained from $D$ by composing its top boundary (of $2m$ points) with a 180-degree reflection of $a$ and its bottom boundary (of $2n$ points) with $b$. 

The $(H^m, H^n)$-bimodule structure of $H^\bullet(\mathcal{F}(D))$ is defined similarly to the vertical composition depicted in Figure \ref{fig:tangle_mult} after remembering that $H^m$ and $H^n$ are direct sums of Khovanov homologies. In this case, the non-cylindrical parts of the cobordisms inducing the left action of $H^m$ and the right action of $H^n$ are cobordisms between flat tangles (instead of tangles in $B^3$). 

Specializing to Definition \ref{defn_skeintriple}, we instead let $T$ be a framed, oriented $(0,n)$-tangle with a compatibly framed, oriented boundary $P$. We additionally assume that $T$ satisfies the same embedding and boundary conditions that are true for the triple $(B^4; B^3; T)$ in Example \ref{exmp_testcase}. 

To each $b \in B^n$, we associate a skein lasagna module $S^2(B^4; T \cup_P T_b)$, where $T_b$ is a choice of an embedded, compatibly framed, oriented tangle in $B^3$ with boundary $-P$ that is represented by the diagram $b$. Skein lasagna modules such as these assemble into a $k$-(sub)module: 
\[
\bigoplus_{\mathclap{b \in H^n}} S^2(B^4;  T \cup_P T_b),
\]
which admits a right action by the $k$-(sub)algebra:
\[
\bigoplus_{\mathclap{(a,b) \in B^n \times B^n}} S^2(B^3 \times I; -T_a \cup_P T_{b}).
\]

With the above presentation, the following lemma feels imminent. 
\begin{lem}\label{lem_tangleinvariant}
Up to a sign ambiguity, there is an isomorphism of $k$-(sub)algebras:
\[
\bigoplus_{\mathclap{(a,b) \in B^n \times B^n}} S^2(B^3 \times I; -T_a \cup_P T_{b}) \cong H^n
\]
and subsequently, an isomorphism of right $H^n$-modules:
\[
\bigoplus_{\mathclap{b \in H^n}} S^2(B^4; T \cup_P T_b) \cong H^\bullet (\mathcal{F}(D)),
\]
\end{lem}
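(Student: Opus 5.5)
The plan is to reduce both isomorphisms to Lemma~\ref{lem_evaluation}, specialised to $\star = 2$ (or to Khovanov homology), and then compare with Khovanov's definitions. Restricting the algebra isomorphism of Lemma~\ref{lem_evaluation} to the summands indexed by pairs $(T_a, T_b)$ of crossingless tangles gives a $k$-algebra isomorphism
\[
\bigoplus_{(a,b) \in B^n \times B^n} S^2(B^3 \times I; -T_a \cup_P T_b) \cong \bigoplus_{(a,b) \in B^n\times B^n} \mathrm{KhR}_2(-T_a \cup_P T_b),
\]
where the right-hand side carries the vertical composition of Figure~\ref{fig:tangle_mult}. This restriction is legitimate: the direct sum over crossingless matchings is closed under multiplication, and it contains the corresponding local units (the cylinders on $T_a$). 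So it really is a subalgebra on both sides.

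Next I identify the right-hand side with $H^n$. The link $-T_a \cup_P T_b$ is a planar unlink, namely the diagram $W(a)b$. Hence $\mathrm{KhR}_2(-T_a\cup_P T_b) \cong \mathrm{Kh}(W(a)b) = {}_a H^n_b$, up to the standard identification of $\mathfrak{gl}_2$ homology with (a regrading of) Khovanov homology. This identification is itself only canonical up to sign, and that is the source of the sign ambiguity in the statement. It remains to check that vertical composition matches Khovanov's multiplication ${}_aH^n_b \otimes {}_bH^n_c \to {}_aH^n_c$. Khovanov's multiplication is induced by the minimal saddle cobordism $W(b)b \to \varnothing$ in the middle, capped by cylinders. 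The cobordism of Figure~\ref{fig:tangle_mult} is $T_b \times$ semicircle in the middle. For a crossingless $T_b$ of $n$ arcs, this is isotopic rel boundary to the union of $n$ saddles contracting $W(b)b$ to the identity matching, that is, to Khovanov's cobordism. Functoriality of $\mathrm{KhR}_2$ (up to sign) then gives equality of the two multiplications.

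The module statement is proved the same way. The module isomorphism of Lemma~\ref{lem_evaluation}, restricted to $b \in B^n$, gives
\[
\bigoplus_b S^2(B^4; T\cup_P T_b) \cong \bigoplus_b \mathrm{KhR}_2(T\cup_P T_b),
\]
and this intertwines the right actions. On the Khovanov side, $T \cup_P T_b$ is represented by the diagram $Db$. With $m=0$, Khovanov's splitting gives $H^\bullet(\mathcal F(D)) \cong \bigoplus_b \mathrm{Kh}(Db)$. The right $H^n$-action is again induced by the same $T_b\times$ semicircle cobordisms, so the same isotopy argument applies.

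The main obstacle is the comparison of cobordism maps. This means verifying that the semicircle cobordism agrees with Khovanov's saddle cobordism, including that the identification is natural across all $a,b,c$ simultaneously, and controlling the signs. Khovanov homology is only functorial up to sign, and $\mathfrak{gl}_2$ fixes signs differently. I would first try to fix, for each $b$, a movie presentation of $T_b\times$ semicircle as $n$ saddles, and check associativity compatibility directly. Failing that, I would settle for an isomorphism up to an overall sign on each summand, as the lemma permits.
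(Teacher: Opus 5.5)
Your proposal is correct and takes essentially the paper's route: restrict the evaluation isomorphism of Lemma~\ref{lem_evaluation} to the crossingless summands, then post-compose with the identification of $\mathrm{KhR}_2$ with Khovanov homology, whose non-canonicity with respect to link cobordisms is exactly the source of the sign ambiguity. Your explicit check that $T_b \times$ semicircle is Khovanov's minimal $n$-saddle cobordism supplies detail the paper leaves as ``straightforward''; note that this cobordism contracts $bW(b)$ to the identity tangle on $2n$ points, not to $\varnothing$ as you wrote at one point.
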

\begin{proof}
The proof is straightforward, following from a modified version of the proof of Lemma \ref{lem_evaluation}. Note that the two isomorphisms in Lemma \ref{lem_tangleinvariant} are specified by post-composing evaluation with the isomorphism connecting Khovanov-Rozansky $\mathfrak{gl}_2$-homology with Khovanov homology. This isomorphism is non-canonical with respect to link cobordisms, which explains the sign ambiguity. 
\end{proof}

\begin{rem}
Note that if $T$ was an $(m, 0)$-tangle with boundary $-P$ and we instead associated to each $a \in H^m$ a skein lasagna module $S^2(B^4; -T_a \cup_P T)$, then such skein lasagna modules would have assembled into a \emph{left} $H^m$-module, which is isomorphic as left $H^m$-modules to $H^\bullet(\mathcal{F}(D))$ (up to a sign ambiguity). 
\end{rem}
\end{exmp}

\section{The gluing homomorphism}\label{sec_gluinghom}

There is a gluing homomorphism that often appears in the minds of those topologists who study skein lasagna modules. This homomorphism was first (to the authors' knowledge) addressed in the specific case of a boundary connected sum by Manolescu-Neithalath in~\cite{manolescu_skein_2022} and later documented more generally by Ren-Wilis in~\cite{ren_khovanov_2025}. It relates the skein lasagna modules associated to two 4-manifolds, which share a common, smooth, codimension zero submanifold of their boundaries, with the skein lasagna module associated to their gluing along the distinguished submanifold. 

More precisely, let $(X_1; (Y, \phi_1); T_1)$ and $(X_2; (-Y, \phi_2); T_2)$ be two triples, each as in Definition \ref{defn_skeintriple}. We also assume that $\phi_1$ and $\phi_2$ extend to compatible framings and $\phi_1^{-1}(\partial{T_1}) =- \phi_2^{-1}(\partial{T_2})$ as compatibly framed, oriented collections of points in $Y$. Let $X = X_1 \cup_Y X_2$ denote the smooth gluing of the two 4-manifolds along a collar neighborhood diffeomorphic to $Y \times I$. The gluing formula as presented in \cite{ren_khovanov_2025} is a surjective mapping: 
\begin{equation}\label{eqn_RWgluing}
\bigoplus_{\mathclap{\substack{T_Y \subset Y \\ \partial{T_Y} = -\phi_1^{-1}(P)}}} S^\star(X_1; T_1 \cup_P \phi_1(T_Y)) \otimes S^\star(X_2; -\phi_2(T_Y) \cup_P T_2) \to S^\star(X; T_1 \cup_P T_2),
\end{equation}
where we simultaneously identify $P$ with $\partial{T_1}$ in $X_1$ and $-\partial{T_2}$ in $X_2$.\footnote{If we wanted to continue being pedantic, then the gluing $T_1 \cup_P T_2$ should likewise be interpreted as:
\[
T_1 \cup_{P \times {0}} P \times I \cup_{P \times {1}} T_2.
\]
}

In this section, we give a new presentation of the quotient computed by Equation \ref{eqn_RWgluing}, which may be self-evident to the skein lasagna topologist, particularly given the exposition in Section \ref{sec_skeintriple}. The target in Equation \ref{eqn_RWgluing} is isomorphic to a familiar bimodule tensor product:
\begin{thm}\label{thm_gluing}
\[
S^\star(X; T_1 \cup_P T_2) \cong S^\star(X_1; Y; T_1) \, \bigotimes_{\mathclap{S^\star(Y; P)}} \, S^\star(X_2; Y; T_2).
\]
\end{thm}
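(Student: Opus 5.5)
We will prove Theorem~\ref{thm_gluing} by descending the Ren--Willis gluing map of Equation~\ref{eqn_RWgluing} to the bimodule tensor product, and then checking that the resulting map is injective.

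The source of Equation~\ref{eqn_RWgluing} is exactly the tensor product over $k$ of the underlying modules $S^\star(X_1; Y; T_1)$ and $S^\star(X_2; Y; T_2)$, after discarding the summands $S^\star(X_1; T_1\cup_P \phi_1(T_Y)) \otimes S^\star(X_2; -\phi_2(T'_Y)\cup_P T_2)$ with $T_Y \neq T'_Y$. These discarded summands are killed in the tensor product over $S^\star(Y;P)$, because the local units are cylinders on individual tangles. Write $\Gamma$ for the Ren--Willis map, which glues a lasagna filling $F_1$ of $X_1$ to a filling $F_2$ of $X_2$ across the collar $Y\times I$.

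First, $\Gamma$ is balanced. Given $A \in S^\star(Y\times I; -T^0_Y\cup T^1_Y)$, the fillings $(F_1\cdot A)\cup F_2$ and $F_1\cup(A\cdot F_2)$ differ only in where $A$ is placed inside the collar. After reparametrizing the collar, both are the filling $F_1\cup A\cup F_2$ of $X$, so they are equal. Hence $\Gamma$ descends to a map
\[
\bar\Gamma\colon S^\star(X_1;Y;T_1)\otimes_{S^\star(Y;P)} S^\star(X_2;Y;T_2)\to S^\star(X;T_1\cup_P T_2).
\]
Surjectivity of $\bar\Gamma$ follows from surjectivity of $\Gamma$, which we take from \cite{ren_khovanov_2025}. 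The argument there isotopes input balls off $Y$ and puts the skein surface transverse to $Y$, so that it meets $Y$ in a tangle $T_Y$.

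The main step is injectivity. We construct an inverse $\Psi$ on lasagna fillings. Given a filling $F$ of $X$, isotope it so that its input balls avoid $Y$ and its surface is transverse to $Y$. Cutting along $Y$ then gives fillings $F_1$ and $F_2$ with common boundary tangle $T_Y$, and we set $\Psi(F)=[F_1\otimes F_2]$. We must check that $\Psi$ respects the relations defining $S^\star(X;L)$. There are two kinds of relation to handle.
\begin{enumerate}
\item \emph{Isotopy and the choice of cut.} Any two transverse positions of $F$ are related by an isotopy. A generic isotopy crosses $Y$ through births, deaths and saddles of $F\cap Y$, and through input balls passing across $Y$. Each such crossing is supported in a small collar $Y\times[-\epsilon,\epsilon]$, so the piece of $F$ there can be recorded as an element $A\in S^\star(Y;P)$. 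The two cut positions then read $F_1\cdot A\otimes F_2'$ and $F_1\otimes A\cdot F_2'$, which are identified in the tensor product over $S^\star(Y;P)$. Input balls crossing $Y$ are handled the same way, because such a ball can be shrunk into a collar of $Y$, where it becomes a filling of $Y\times I$ with one input ball.
\item \emph{Inserting a filled ball.} The relation that replaces an input ball by a sub-filling can be performed away from $Y$ after an isotopy, so it is compatible with the cut.
\end{enumerate}
Once both are checked, $\Psi\circ\bar\Gamma$ is visibly the identity on generators $F_1\otimes F_2$, since cutting the glued filling along the original copy of $Y$ returns $(F_1,F_2)$. Together with surjectivity, this shows that $\bar\Gamma$ is an isomorphism.

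We expect the main obstacle to be the first check: making rigorous that every isotopy of $X$, together with every lasagna relation, factors through moves supported in a collar of $Y$. This requires a general position and Cerf-type argument for surfaces with input balls relative to the hypersurface $Y$. Our first attempt will use a Morse function on the collar coordinate, chosen so that all critical events occur at distinct levels, and will absorb each event into the $S^\star(Y;P)$-action exactly as in the proof of Lemma~\ref{lem_evaluation}.
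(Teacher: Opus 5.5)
Your proposal is correct and follows the paper's proof essentially step for step: a glue map, an inverse cut map defined after moving input balls off $Y$ and making the skein transverse to $Y$, and well-definedness of the cut by absorbing into the $S^\star(Y;P)$-action both input balls crossing $Y$ (cut along $Y$ and along a parallel copy $Y'$) and motion of the surface through $Y$; your explicit checks of balancedness and of the filled-ball relation are details the paper leaves implicit. The one difference is that, for isotopies fixing the input balls, the paper does not factor into births, deaths and saddles: it records the whole trace $J=\bigcup_t(\Sigma_t\cap Y)\times\{t\}$ as a single element of $S^\star(Y;P)$ and sets $\Sigma_J=\Sigma(0)|_{X_1}\cup J\cup\Sigma(1)|_{X_2}$, so that $\Sigma(0)\simeq\Sigma_J$ by an isotopy supported in $X_2$ and $\Sigma(1)\simeq\Sigma_J$ by one supported in $X_1$, giving $\Sigma_1\otimes\Sigma_2=\Sigma_1\otimes J\cdot\Sigma_2'=\Sigma_1\cdot J\otimes\Sigma_2'=\Sigma_1'\otimes\Sigma_2'$. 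This avoids the Cerf-type factorization you flag as the main obstacle, and it automatically covers the transverse stretches of the isotopy that your list of events omits: there $F_t\cap Y$ only moves by isotopy in $Y$, but this already changes the summand $T_Y$ and so also needs the action of the trace cylinder.
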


In the next subsection, we substantiate Theorem \ref{thm_gluing} with a topological proof. 

\subsection{Topological Picture}

There is an obvious map connecting the right-hand side of the isomorphism in Theorem \ref{thm_gluing} to the left-hand side. This map in shorthand is ``glue". In more words, we have a homomorphism:
\[
\Psi \colon S^\star(X_1; Y; T_1) \, \bigotimes_{\mathclap{S^\star(Y; P)}} \, S^\star(X_2; Y; T_2) \to S^\star(X; T_1 \cup_P T_2),
\]
which is described similarly to multiplication in $S^*(Y; P)$. If $\Sigma_1$ and $\Sigma_2$ are representatives of fillings respectively in $S^\star(X_1; Y; T_1)$ and $S^\star(X_2; Y; T_2)$, then the image of $\Sigma_1 \otimes \Sigma_2$ under $\Psi$ is defined as below:
\[
\Psi(\Sigma_1 \otimes \Sigma_2) \coloneqq 
\begin{cases}
\Sigma_1 \cup \Sigma_2 & \text{if } \partial \Sigma_1 \cap Y = - \partial \Sigma_2 \cap -Y  \\
0 & \text{otherwise.}
\end{cases}
\]
Note that the bottom condition in the above assignment is slightly redundant. If the boundaries of $\Sigma_1$ and $\Sigma_2$ were incompatible to begin with, then $\Sigma_1 \otimes \Sigma_2$ is already zero in the bimodule. 

To convince ourselves that $\Psi$ is an isomorphism, we construct an inverse whose operation in shorthand is ``cut". Explicitly, define 
\[
\Psi^{-1} \colon S^\star(X; T_1 \cup_P T_2) \to S^\star(X_1; Y; T_1) \, \bigotimes_{\mathclap{S^\star(Y; P)}} \, S^\star(X_2; Y; T_2)
\]
to be the homomorphism specified by the following assignment on fillings. To each filling $\Sigma$ in $S^*(X; T_1 \cup T_2)$, apply an isotopy to its underlying surface so that
\begin{enumerate}
\item\label{condition_1} the input balls of $\Sigma$ are supported away from a neighborhood of $Y$ and 
\item\label{condition_2} $\Sigma$ intersects $\phi_i(Y)$ transversely in some tangle $\phi_i(T_Y)$. 
\end{enumerate}
The above rearranging ensures that $\Sigma$ decomposes as 
\[
\Sigma = \Sigma_1 \cup_{\phi_i(T_Y)} \Sigma_2,
\]
where $\Sigma_1$ and $\Sigma_2$ are fillings respectively in $S^\star(X_1; Y; T_1)$ and $S^\star(X_2; Y; T_2)$. Evidently, we set $\Psi^{-1}(\Sigma)$ to be the tensor product of these fillings: $\Sigma_1 \otimes \Sigma_2$. 

At first glance, it appears that the definition of $\Psi^{-1}(\Sigma)$ depends on the choice of isotopy of $\Sigma$ arranging conditions \ref{condition_1} and \ref{condition_2}. The heart of proving Theorem \ref{thm_gluing} is rendering this choice unambiguous.

\begin{proof}[Proof of \ref{thm_gluing} (or that $\Psi^{-1}$ is well-defined)]
The fact that $\Psi^{-1}$ is independent of the choice of isotopy follows from an argument that closely resembles that for Theorem 1.4 in~\cite{manolescu_skein_2022}. Nevertheless, we recount the details with minor technical updates, which are relevant for our context. 

Let $\Sigma'$ be a different isotopic representative of $\Sigma$ satisfying conditions \ref{condition_1} and \ref{condition_2}, which decomposes as
\[\Sigma' = \Sigma'_1 \cup_{\phi_i(T'_Y)} \Sigma'_2.\]
There is a one parameter family of surfaces $\{\Sigma(t)\}_{t \in [0,1]}$ connecting $\Sigma = \Sigma(0)$ to $\Sigma' = \Sigma(1)$. For simplicity, we assume that this one parameter family either moves an input ball $B$ of $\Sigma$ through $\phi_i(Y)$ and keeps the rest of the skein fixed outside of this ball or fixes all of the input balls. (Any arbitrary one parameter family will be a sequence in both of these operations, up to isotopies that are supported away from a neighborhood of $\phi_i(Y)$.)

In the first case, we take $B$ to be a neighborhood of a point and assume that generically $B$ (or this point) moves from one side of $\phi_i(Y)$ to the other for a finite number of times. Let $t_j$ denote one such time and $[t_j - \epsilon, t_j + \epsilon]$ denote the sub-interval on which the point intersects $\phi_i(Y)$ under the isotopy just once at $t_j$. By construction, the intersections $\Sigma_i(t_j - \epsilon) \cap \phi_i(Y)$ and $\Sigma_i(t_j + \epsilon) \cap \phi_i(Y)$ are automatically transverse. 

If we take $\epsilon$ to be sufficiently small, then the intersection of $\Sigma(t_j - \epsilon)$ with $Y$ differs from the intersection of  $\Sigma(t_j + \epsilon)$ with $Y$ by replacing $\phi_i(Y)$ with a certain isotopic representative $\phi'_i(Y)$. Specifically, we require that the region between $\phi_i(Y)$ and $\phi'_i(Y)$ is diffeomorphic to $Y \times I$, and  $\Sigma(t_j - \epsilon)$ intersects $\phi'_i(Y)$ transversely in $\Sigma(t_j + \epsilon) \cap \phi_i(Y)$, and the intersection has boundary diffeomorphic to $-P$ (as framed, oriented collections). 

Altogether, we can realize $\Psi^{-1}(\Sigma(t_j - \epsilon))$ by cutting $\Sigma(t_j - \epsilon)$ along $\phi_i(Y)$ and $\Psi^{-1}(\Sigma(t_j + \epsilon))$ by cutting the same skein along $\phi'_i(Y)$. These cuts are further equivalent in the bimodule after intertwining the action of a filling in $S^*(Y; P)$ specified by cutting along both $\phi_i(Y)$ and $\phi'_i(Y)$. Hence, after proceeding inductively, we prove that $\Sigma_1 \otimes \Sigma_2$ and $\Sigma_1' \otimes \Sigma_2'$ are equivalent in the bimodule. 

In the second case, we consider the intersections of $\{\Sigma_t\}_{t \in [0, 1]}$ with $\phi_i(Y)$ at each time $t$: 
\[J_t = \Sigma_t \cap Y,\]
and collect these intersections into a surface with corners:
\[J = \bigcup_{\mathclap{t \in [0, 1]}} \, J_t \times \{t\} \subset \phi_i(Y) \times I.\]
By smoothing corners, we can assume that $J$ is a (vegetarian) filling in $S^*(Y; P)$.

From $J$, we build a second surface with corners by first thickening $\phi_i(Y)$ into a ``neck" or submanifold diffeomorphic to $Y \times I$ and then inserting $J$ into this neck:
\[\Sigma_J = \Sigma(0) |_{X_1} \cup J \cup \Sigma(1)|_{X_2}.\]
This surface can be similarly smoothed into a filling in $S^*(X; T_1 \cup_P T_2)$.

We glean two key insights from this procedure. The first is that $\Sigma(0)$ is isotopic to $\Sigma_J$ through an isotopy supported in $X_2$ and the second is that $\Sigma(1)$ is also isotopic to $\Sigma_J$ through an isotopy supported in $X_1$. The isotopy connecting $\Sigma(0)$ to $\Sigma_J$ at a given time $t$ is deduced (as before) by smoothing a surface with corners:
\[
\Sigma(0) |_{X_1} \cup \bigcup_{\mathclap{s \in [0, t]}} \, J_s \times \{s\}  \cup \Sigma(t)|_{X_2}.
\]
A similar construction describes the isotopy between $\Sigma(1)$ and $\Sigma_J$. In sum, these isotopies translate to bimodule equivalences relating $\Sigma_1 \otimes \Sigma_2$ to $\Sigma_1 \otimes J \cdot \Sigma_2'$ and $\Sigma_1' \otimes \Sigma_2'$ to $\Sigma_1 \cdot J \otimes \Sigma_2'$. The biaction further ensures a third crucial equivalence between $\Sigma_1 \otimes J \cdot \Sigma_2'$ and $\Sigma_1 \cdot J \otimes \Sigma_2'$---ultimately settling the desired claim. 
\end{proof}

\begin{rem}
It may be instructive for the skein lasagna topologist to convince themselves that either side of $\Psi$ is a worthy representative for the pushout of a particular diagram of skein lasagna modules. In Figure \ref{fig:Kernel}, we have depicted a toy picture of this diagram. 
\end{rem}

\begin{figure}[hbt!]
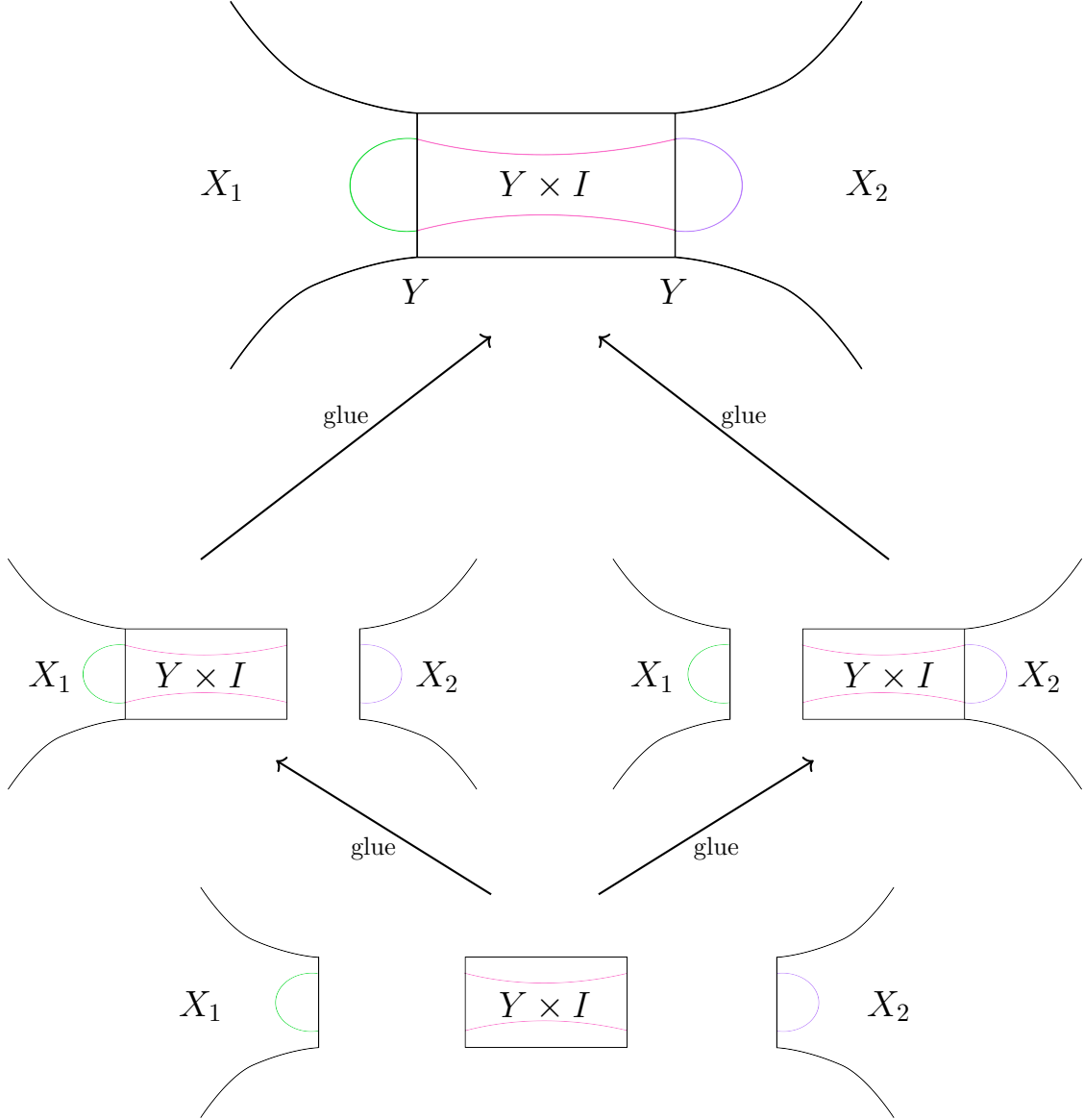

  \imgoverlay{bimodule_diagram.png}{
  \node[fill=none, inner sep=2pt] at (0.2,0.835) {{\LARGE $X_1$}}; 
  \node[fill=none, inner sep=2pt] at (0.5,0.835) {{\LARGE $Y \times I$}}; 
\node[fill=none, inner sep=2pt] at (0.8,0.835) {{\LARGE $X_2$}}; 
\node[fill=none, inner sep=2pt] at (0.62,0.74) {{\LARGE $Y$}}; 
\node[fill=none, inner sep=2pt] at (0.38,0.74) {{\LARGE $Y$}}; 
\node[fill=none, inner sep=2pt] at (0.18,0.395) {{\LARGE $Y \times I$}}; 
\node[fill=none, inner sep=2pt] at (0.82,0.395) {{\LARGE $Y \times I$}}; 
\node[fill=none, inner sep=2pt] at (0.04,0.395) {{\LARGE $X_1$}}; 
\node[fill=none, inner sep=2pt] at (0.4,0.395) {{\LARGE $X_2$}}; 
\node[fill=none, inner sep=2pt] at (0.6,0.395) {{\LARGE $X_1$}}; 
\node[fill=none, inner sep=2pt] at (0.96,0.395) {{\LARGE $X_2$}}; 

\node[fill=none, inner sep=2pt] at (0.18,0.1) {{\LARGE $X_1$}}; 
\node[fill=none, inner sep=2pt] at (0.82,0.1) {{\LARGE $X_2$}}; 
 \node[fill=none, inner sep=2pt] at (0.5,0.1) {{\LARGE $Y \times I$}}; 

\draw[->, thick]
    (0.18,0.5) -- (0.45,0.7)
    node[midway, above, yshift = 4pt] {$\text{glue}$};
 \draw[->, thick]
    (0.82,0.5) -- (0.55,0.7)
    node[midway, above,yshift=4pt] {$\text{glue}$};
\draw[->, thick]
    (0.45,0.2) -- (0.25,0.32)
    node[midway, below, xshift = -4pt] {$\text{glue}$};
 \draw[->, thick]
    (0.55,0.2) -- (0.75,0.32)
    node[midway, below,xshift=4pt] {$\text{glue}$};
}

  \caption{A pushout diagram of gluing homomorphisms.}
  \label{fig:Kernel}
\end{figure}

\section{Handle attachment formulae}\label{sec_handleformulae}

In this section, we apply Theorem~\ref{thm_gluing} to derive various handle attachment formulae for skein modules defined with a functorial link homology theory $H^{\star}$ of (potentially decorated) link cobordisms in $S^3 \times I$. At times, we may omit the boundary data from our notation for readability.

In their work on non-diffeomorphic 4-manifolds, Ren-Willis~\cite{ren_khovanov_2025} showed that the Lee skein theory satisfies a very similar 2-handle formula to the Manolescu-Neithalath analogue for the Khovanov skein theory~\cite{manolescu_skein_2022} and Chen showed that link Floer homology also satisfies a similar formula for 2-handle attachments~\cite{chen_floer_2022}. The general formula we provide for 2-handle attachments provides some additional clarity as to why these three previously known 2-handle formula share similarities. 

For completeness we remark briefly on 4-handles even though a topological general position argument easily shows that the addition of a 4-handle does not change a skein module~\cite{manolescu_skein_2022}. Let $X$ be the result of attaching a 4-handle to $X_1$ and take $Y$ to be the attaching region $S^3$. Here also $X_2 = B^4$ and both the skein modules $S^{\star}(X_1)$ and $S^{\star}(X_2)$ become modules over $S^{\star}(S^3\times I)$. For co-dimension reasons we know that both $L$ and any filling don't intersect the 4-handle so the relevant skein modules are $S^{\star}(X_1, L)$ and $S^{\star}(B^4, \emptyset) \cong k$. Additionally we know that the action of $S^{\star}(S^3\times I,\emptyset)$ on $S^{\star}(X_1, L)$ is exactly the $k$-module structure on $S^{\star}(X_1, L)$. Putting this together then the identification of $S^{\star}(X,L)$ with the quotient of $S^{\star}(X_1,L)\otimes S^{\star}(B^4, \emptyset)$ by the identification of the module actions recovers the previously known isomorphism $S^{\star}(X,L)\cong S^{\star}(X_1,L)$.

\begin{exmp}[1-handle attachment]\label{1-handle}
    
Let $X$ be the result of attaching a 1-handle to $X_1$ and take $Y$ to be the attaching region $S^0\times B^3 = -B^3 \sqcup B^3$. So $X_2 = B^1 \times B^3 \cong B^4$. We will compute $S^\star(X; L)$. 

By a general position argument, we can assume that $L$ intersects $X_1$ in a compatibly framed, oriented tangle $T$ and $X_2$ in $P \times I$, which is interpreted as a compatibly framed, oriented collection of core-parallel arcs. Note that both $S^*(X_1; -B^3 \sqcup B^3; T)$ and $S^*(X_2; -B^3 \sqcup B^3; P \times I)$ become modules over $S^\star(-B^3 \sqcup B^3; P)$, which is isomorphic as a $k$-algebra to $S^*(B^3; P)^\text{op} \times S^*(B^3; P)$.

Alternatively, this can be thought of as each having two distinct module structures over $S^\star(B^3)$: one for each of the attaching 3-ball regions. So, $S^\star(X_1)$ becomes both a right and left $S^\star(B^3)$-module and similarly for $S^\star(X_2)$. The left and right actions commute with one another other on both $S^\star(X_1)$ and $S^\star(X_2)$.

Theorem~\ref{thm_gluing} tells us that $S^\star(X; L)$ is obtained from $S^*(X_1) \otimes_{k} S^*(X_2)$ by setting the left action on $S^*(X_1)$ equal to the right action on $S^*(X_2)$ and the right action on $S^*(X_2)$ equal to the left action on $S^*(X_1)$. We can simplify this picture by recalling that we have an identification between $S^*(X_2)$ and $S^*(B^4)$.

Recalling this identification, given a filling $\Sigma_1 \otimes \Sigma_2$ in $S^\star(X,L)$, we can instead represent it as  $S_1 \cdot \Sigma_1 \cdot S_2$, for any choice fillings $S_1,S_2$ with $S_2 \cdot S_1 = \Sigma_2$. This insight tells us that in fact we have a surjective map $$f: \oplus_{T_i} S^{*} \left( X_1, T \cup (T_i \sqcup - T_i) \right) \to S^\star(X, L),$$
and the kernel is generated by the identification of the various fillings $S_1 \cdot \Sigma_1 \cdot S_2$ for different factorizations of $S_2 \cdot S_1 =\Sigma_2$. If we restrict to Khovanov-Rozansky $\mathfrak{gl}_n$-homology this exactly recovers 1-handle formula of Manolescu-Walker-Wedrich~\cite[Theorem~4.7]{manolescu_skein_2023}.
\end{exmp} 

\begin{exmp}[2-handle attachment]\label{2-handle}
Let $X$ be the result of attaching a 2-handle to $X_1$ and take $Y$ to be the attaching region $S^1\times B^2$. Here also $X_2 = B^2 \times B^2 \cong B^4$ and both the skein modules $S^\star(X_1)$ and $S^{\star}(X_2)$ become modules over $S^{\star}(S^1\times B^2)$. Using general position arguments, will may assume that $L$ is disjoint from the attaching region $Y$. Our tensor factors are initially
\[
    \mathcal{X}_1 = S^\star(X_1, Y, L); \quad \mathcal{X}_2 = S^\star(X_2, \emptyset); \quad \mathcal{Y} = S^\star(Y, \emptyset) .
\]
However we will first work to understand a smaller tensor product, which admits an obvious map into $\mathcal{X}_1 \otimes_{\mathcal{Y}} \mathcal{X}_2$. We first describe a subalgebra $\mathcal{Y}' $ of $\mathcal{Y}$: We will consider fillings of $S^1 \times B^2 \times I$ generated by fillings $S^1 \times T$ where $T$ where $T$ is either a crossingless $(n,n+2)$ tangle, with $n$ sheets between the two sides and a single semicircle between the two ends on the $n+2$ side or its reverse, an $(n+2, n)$-tangle. Additionally, we allow decorations on the sheet $S^1 \times T$ as determined by the functorial link theory $H^\star$. Finally, we allow fillings $S^1 \times B$ for braids $B$.  Call this algebra $\mathcal{Y}'$. There are right- and left- $\mathcal{Y}'$ submodules of $\mathcal{X}_1$ and $\mathcal{X}_2$;
\[
 \mathcal{X}_1' =   \bigoplus S^\star( X_1, L \cup K_{n_+, n}); \quad \mathcal{X}_2' = \bigoplus S^\star(X_2, U_{n_+, n_-}) .
\]
Note there is a map $\mathcal{X}_1' \otimes_{\mathcal{Y}'} \mathcal{X}_2' \to \mathcal{X}_1 \otimes_{\mathcal{Y}} \mathcal{X}_2 \cong S^\star(X,L)$.

Now there is a also map $\mathcal{X}_1' \to \mathcal{X}_1' \otimes_{Y'} \mathcal{X}_2'$ which simply tensors lasagna fillings $v$ in $X_1$ with correctly oriented cocore-transverse disks $d(v)$ in $X_2$. Take $v$ in $\mathcal{X}_1'$, and let $a \in \mathcal{Y}'$ be a (possibly decorated) $(n, n+2)$-sheet. Then in our bimodule tensor product we identify a simple tensor $v . a \otimes d(v.a)$ with $v \otimes a . d(v)$.  But $a. d(v)$ is the skein element $v \sqcup S$ where  $S$ is a possibly decorated sphere; the specifics of $H^{\star}$ determine an evaluation of these potentially decorated) spheres. Similarly if $a$ is a braided sheet then the action of $a$ on $d(v)$ is trivial. These two facts show that our map $\mathcal{X}_1' \to \mathcal{X}_1' \otimes_{Y}' \mathcal{X}_2'$ descends to a map on $\mathcal{X}_1' / \sim$, where $\sim$ is the quotient by the braiding relations along with the relations induced by consideration of (potentially decorated) spheres.

In total we now have a map 
\begin{align*}
\frac{\oplus S^\star(X_1, L \cup K_{n_+, n_-})}{\sim} \to S^\star(X,L).
\end{align*}
The proof that this map is an isomorphism follows the same steps as the similar argument in~\cite{manolescu_skein_2022} so details are omitted. Showing that the map is surjective can be done via general position arguments on fillings of $(X,L)$ with respect to the co-core of the 2-handle to describe a filling of $(X_1, L \cup K_{n_+, n_-})$ after cutting along the 2-handle. The argument for injectivity follows from considering how two different choices of an isotopy of a filling to realize transverseness to the co-core might differ in a neighborhood of the co-core. 

In the case that we restrict to Khovanov-Rozansky $\mathfrak{gl}_n$-homology we have exactly recovered the 2-handle formula of Manolescu-Neithalath~\cite{manolescu_skein_2022}. Restricting to link Floer homology and Lee homology recover the 2-handle formula of Chen and Ren-Willis respectively~\cite{chen_floer_2022,ren_khovanov_2025}.
\end{exmp}

\begin{exmp}[3-handle attachment]\label{3-handle}
Finally, we consider the case of 3-handle attachments. Let $X$ be the result of attaching a 3-handle to $X_1$ and take $Y$ to be the attaching region $S^2\times B^1$. Here also $X_2 = B^3 \times B^1 \cong B^4$. Define skein lasagna submodules and sub-algebra
\[\mathcal{X}_1 = S^{\star}(X_1, L) ; \quad \mathcal{X}_2 = S^{\star}(X_2, \emptyset) ; \quad \mathcal{Y}  = S^{\star} (S^2 \times B^2, \emptyset).
\]
A standard general position argument allows us to refine Theorem~\ref{thm_gluing} in this instance to say
\[
S^{\star}(X) \cong \mathcal{X}_1 \otimes_{\mathcal{Y}} \mathcal{X}_2 .
\]

Since $\mathcal{X}_2 \cong k$ we we learn that 
 $$S^{\star}(X) \cong \frac{\mathcal{X}_1}{\sim} $$ where the relation $\sim$ is given from the right action of $S^{\star}(S^2 \times B^2)$ on $\mathcal{X}_1'$ being set equal to the left action on $S^{\star}(B^4) = k$. In the case that we restrict to Khovanov-Rozansky $\mathfrak{gl}_n$-homology and use a computation of $S^n(S^2 \times B^2)$ we have exactly recovered the 3-handle formula of Manolescu-Walker-Wedrich~\cite[Remark~3.9]{manolescu_skein_2023}.
\end{exmp}

\bibliographystyle{plain}
\bibliography{references}

\end{document}